\newtheorem*{thm*}{Theorem}
\newtheorem{thm}{Theorem}[section]
\newtheorem{THM}{Theorem}
\newtheorem{prop}[thm]{Proposition}
\newtheorem{lemma}[thm]{Lemma}
\theoremstyle{definition}
\newtheorem{remark}[thm]{Remark}
\DeclareMathOperator{\diff}{Diff}
\title{Distributions and Legendrian foliations in dimension $3$}
\author{Maycol Falla Luza}
\author{Rudy Rosas}
\begin{document}
\maketitle

\begin{abstract}
We study the field of rational first integrals of distributions. We show that for a distribution on $3$ dimensional manifolds there exists a tangent vector field with the same field of first integrals. We also show a similar result for integrable distributions in any dimension.
\end{abstract}

\section{Introduction}

Let $M$ be a projective manifold. If $X$ is a rational vector field  on $M$, a rational first integral of $X$ is a rational function in $M$ that is  constant along the integral curves of $X$. We denote  by $\mathbb{C}(X)$ the field of rational first integrals of $X$.  Furthermore, if  $\mathcal D$ is a singular holomorphic distribution on $M$,  we say that $h$ is a rational first integral of $\mathcal{D}$ if $h \in \mathbb{C}(X)$ for all  rational  vector field $X$  tangent to $\mathcal{D}$. We denote by $\mathbb{C}(\mathcal{D})$ the field of rational  first integrals of $\mathcal{D}$. 

In the case of distributions on  affine manifolds,  the algebra $\mathbb{C}[\mathcal{D}]$ of polynomial first integrals has been studied by several authors, see for example \cite{Bon}, \cite{NagNow}, \cite{Now}.  For instance, Nowicki shows in \cite{Now} the existence of a vector field $X$ such that $\mathbb{C}[X]=\mathbb{C}[\mathcal{D}]$.
P. Bonnet refines Nowicki's result and shows in \cite{Bon} that $X$ can be found being tangent to $\mathcal{D}$. A natural question -- also posed by Bonnet in his paper --- is to find an analogous result for rational first integrals:  given a singular holomorphic distribution $\mathcal D$ on $M$, is it possible to find a rational vector field $X$ on $M$ such that 
 $\mathbb{C}(X)=\mathbb{C}(\mathcal{D})$?  The question  is quite addressable when the distribution is integrable, as we show as a first result of the paper.

\begin{THM}\label{Teorema 1} 
Let $M$ be a projective manifold and let $\mathcal D$ be  an  integrable   singular holomorphic distribution  on $M$. Then there is a rational vector field  $X$ on $M$ such that $\mathbb{C}(X)=\mathbb{C}(\mathcal{D})$.
\end{THM}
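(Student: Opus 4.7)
The plan is to recast the problem field-theoretically and build $X$ as a derivation of $\mathbb{C}(M)$.

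Write $L=\mathbb{C}(M)$ and $K=\mathbb{C}(\mathcal{D})$. Rational vector fields on $M$ correspond to $\mathbb{C}$-derivations $X\colon L\to L$, and for such an $X$ one has $\mathbb{C}(X)=\ker X$. The inclusion $K\subseteq\ker X$ amounts exactly to $X$ being a $K$-derivation (it kills $K$); the hard part is the equality $\ker X=K$. So the task reduces to finding a nonzero $K$-derivation $X$ of $L$ with $\ker X=K$.

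The crucial preliminary remark is that \emph{$K$ is algebraically closed in $L$}. Given $h\in L$ algebraic over $K$ with minimal polynomial $P\in K[T]$, any rational vector field $Y$ tangent to $\mathcal{D}$ annihilates the coefficients of $P$; differentiating $P(h)=0$ gives $P'(h)\,Y(h)=0$, and by minimality and characteristic-zero separability $P'(h)\neq 0$, so $Y(h)=0$. Thus $h$ is a first integral of every vector field tangent to $\mathcal{D}$, and $h\in K$ by definition.

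To build $X$, pick a transcendence basis $t_1,\ldots,t_d$ of the finitely generated extension $L/K$; then $L$ is finite separable over $F=K(t_1,\ldots,t_d)$. Choose $\lambda_1,\ldots,\lambda_d\in\mathbb{C}$ that are $\mathbb{Q}$-linearly independent, and set
\[
X \;=\; \sum_{i=1}^{d}\lambda_i\, t_i\, \partial_{t_i}
\]
on $F$, extended uniquely (by separability) to $L$ and to a Galois closure $\tilde L$ of $L/F$. Verifying $\ker X=K$ proceeds in two stages. For $h\in F$ write $h=P/Q$ with coprime $P,Q\in K[t_1,\ldots,t_d]$; from $X(h)=0$ one gets $X(P)Q=PX(Q)$, hence $P\mid X(P)$, and since $\deg X(P)\leq\deg P$ the quotient $X(P)/P$ must be a constant $c\in K$. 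Expanding $P=\sum p_\alpha t^\alpha$ and comparing coefficients in $X(P)=cP$ gives $\lambda\cdot\alpha=c$ for every $\alpha$ with $p_\alpha\neq 0$; by the $\mathbb{Q}$-independence of the $\lambda_i$ at most one such $\alpha$ exists, so $P$ (and likewise $Q$) is a single monomial, and then $X(h)=0$ forces the multidegrees of $P$ and $Q$ to coincide, giving $h\in K$. For general $h\in L$ with $X(h)=0$, the uniqueness of the derivation extension implies $X$ commutes with $\mathrm{Gal}(\tilde L/F)$, so every conjugate $\sigma(h)\in\tilde L$ is annihilated by $X$; the elementary symmetric functions of the $\sigma(h)$'s are Galois-invariant and killed by $X$, so they lie in $F\cap\ker X=K$. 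Hence $h$ is algebraic over $K$, and by the preliminary remark $h\in K$.

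The delicate point is the last one: passing from the purely transcendental $F$ to the finite algebraic extension $L$ without enlarging the kernel. The Galois-norm trick works precisely because $K$ has been shown to be algebraically closed in $L$, so establishing that property at the outset is what makes the whole strategy go through.
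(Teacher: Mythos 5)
Your argument is internally correct and does prove the statement exactly as printed: for any subfield $K\subseteq\mathbb{C}(M)$ containing $\mathbb{C}$ and algebraically closed in $\mathbb{C}(M)$, the derivation $\sum\lambda_i t_i\partial_{t_i}$ with $\mathbb{Q}$-independent $\lambda_i$ has kernel exactly $K$; your preliminary remark that $\mathbb{C}(\mathcal{D})$ is algebraically closed in $\mathbb{C}(M)$, the monomial argument over $F=K(t_1,\dots,t_d)$, and the Galois descent back to $L$ are all sound. This is also a genuinely different route from the paper, which argues geometrically: it takes the foliation $\mathcal{G}$ integrating $\mathcal{D}$, uses an exact sequence of sheaves and the vanishing $H^1(M,T_{\mathcal{G}}\otimes\mathfrak{M}_x^2\otimes\mathcal{L}^k)=0$ to produce a global vector field $X$ \emph{tangent to} $\mathcal{G}$ whose restriction to a generic leaf has a singularity with very generic linear part (hence linearizable and without meromorphic first integrals on the leaf), and concludes that any $h\in\mathbb{C}(X)$ is constant on nearby leaves.

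There is, however, a substantive mismatch with what the paper is actually proving. Your construction never uses integrability of $\mathcal{D}$, so it would establish the identical conclusion for an arbitrary distribution in any dimension --- which would render Theorem 2 and the rest of the paper vacuous. The resolution is that the intended statement (explicit in the abstract, in Bonnet's result being generalized, and in both of the paper's constructions) is that $X$ can be chosen \emph{tangent to} $\mathcal{D}$. Your $X=\sum\lambda_i t_i\partial_{t_i}$, built from an arbitrary transcendence basis of $\mathbb{C}(M)$ over $\mathbb{C}(\mathcal{D})$, is a $\mathbb{C}(\mathcal{D})$-derivation but has no reason to take values in $\mathcal{D}$: for instance, when $\mathbb{C}(\mathcal{D})=\mathbb{C}$ your basis is any transcendence basis of $\mathbb{C}(M)$ and $X$ is essentially unconstrained. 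Tangency is precisely where integrability enters the paper's proof (one needs a large enough supply of vector fields tangent to the leaves to prescribe a generic linear part at a point of a leaf), so the missing ingredient is not a technicality. If the theorem is read literally, your proof stands and is cleaner and more general; if it is read as the abstract states it, the key step --- producing $X$ tangent to $\mathcal{D}$ --- is absent.
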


As the main result of the paper, we give a positive answer to the question in dimension 3. 

\begin{THM}\label{Teorema 2}
Let $M$ be a projective manifold of dimension $3$ and  let $\mathcal{D}$ be  a codimension one holomorphic distribution on $M$. Then there is a  rational vector field  $X$ on $M$ such that $\mathbb{C}(X)=\mathbb{C}(\mathcal{D})$.
\end{THM}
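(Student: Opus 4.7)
The plan is to dichotomize according to the integrability of $\mathcal{D}$.

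\emph{Integrable case.} If $\mathcal{D}$ is integrable, Theorem~\ref{Teorema 1} applies directly and produces the desired $X$, so the whole substance is in the non-integrable case.

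\emph{Non-integrable case.} Assume $\omega$ is a rational $1$-form on $M$ with $\mathcal{D} = \ker\omega$ and $\omega\wedge d\omega \not\equiv 0$. I first reduce the problem by showing $\mathbb{C}(\mathcal{D}) = \mathbb{C}$. If $h \in \mathbb{C}(\mathcal{D})$, then $dh(X) = 0$ for every rational $X$ tangent to $\mathcal{D}$; since such $X$ generically span the rank-$2$ distribution $\mathcal{D}$, one must have $dh = f\omega$ for some rational $f$. Differentiating gives $df \wedge \omega + f\,d\omega = 0$, and wedging with $\omega$ yields $f\,(\omega \wedge d\omega) = 0$, so $f \equiv 0$ and $h$ is constant. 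Thus in this case it suffices to exhibit a rational Legendrian vector field $X$ (tangent to $\mathcal{D}$) with no non-constant rational first integral, because then automatically $\mathbb{C}(X) = \mathbb{C} = \mathbb{C}(\mathcal{D})$.

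To produce such an $X$, I would argue by genericity along a pencil of Legendrian fields. Fix rational vector fields $X_0, X_1$ spanning the rank-$2$ module of sections of $\mathcal{D}$ over $\mathbb{C}(M)$, and set $X_t := X_0 + t X_1$ for $t \in \mathbb{C}$. Suppose, for contradiction, that $X_t$ admits a non-constant rational first integral $h_t$ for every $t$ in an uncountable subset of $\mathbb{C}$. Bounding the degree of $h_t$ uniformly on a co-countable subfamily, the pairs $(t, h_t)$ sweep out a constructible subvariety of $\mathbb{P}^1 \times \mathbb{P}^N$, so after shrinking we can assume the $h_t$ assemble into a single $H \in \mathbb{C}(M \times \mathbb{P}^1)$ with $H(\cdot,t)$ a first integral of $X_t$. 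Differentiating the identity $d_x H(\cdot,t)(X_0) + t\,d_x H(\cdot,t)(X_1) = 0$ with respect to $t$, and combining the resulting relations with non-integrability of $\omega$, should force some non-trivial rational function into $\mathbb{C}(\mathcal{D})$, contradicting the reduction above.

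The main obstacle is precisely this last extraction: showing that if all Legendrian foliations in such a pencil are algebraically integrable, then $\mathcal{D}$ inherits a non-constant rational first integral. Phrased intrinsically, the technical heart of the theorem should be a Darboux--Jouanolou-type dichotomy for Legendrian foliations: a non-integrable codimension-one distribution on a projective $3$-fold must admit a Legendrian foliation that is not algebraically integrable. Once this dichotomy is established, the theorem closes by combining with Theorem~\ref{Teorema 1} in the integrable case.
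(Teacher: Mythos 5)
Your reduction is sound and agrees with what the paper does implicitly: in the non-integrable case $\omega\wedge d\omega\not\equiv 0$ forces $\mathbb{C}(\mathcal{D})=\mathbb{C}$ (the computation $dh=f\omega\Rightarrow f\,\omega\wedge d\omega=0$ is correct), so the theorem reduces to producing one rational Legendrian vector field with no non-constant rational first integral. The problem is the step you yourself flag as ``the main obstacle'': it is not a technicality but the entire content of the theorem, and the pencil argument you propose for it is demonstrably insufficient. Take $M=\mathbb{P}^3$ with the standard contact structure $\omega=dz-y\,dx$ and the natural spanning pair $X_0=\partial_y$, $X_1=\partial_x+y\,\partial_z$. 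Then every member $X_t=X_0+tX_1$ of the pencil has the two independent rational first integrals $y-tx$ and $z-xy+\tfrac{t}{2}x^2$. So the hypothesis ``$X_t$ is algebraically integrable for uncountably many $t$'' holds for every $t$, the family $H(x,t)$ assembles perfectly, and yet $\mathbb{C}(\mathcal{D})=\mathbb{C}$: no contradiction can be extracted by differentiating in $t$, because none exists. The point is that a pencil with \emph{constant} coefficients $t$ sweeps out only a one-parameter slice of the space of Legendrian foliations (which is parametrized by arbitrary rational functions $f$ via $X_0+fX_1$), and that slice can consist entirely of integrable members.

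The paper's route is entirely different and genuinely constructive. It fixes a generic linear projection $\Pi\colon M\dashrightarrow\mathbb{P}^2$ and lifts to $\mathcal{D}$ a single, explicitly chosen logarithmic foliation $\mathcal{F}$ on $\mathbb{P}^2$ whose real trace is a global center and whose holonomy carries $(r,0)$ to $(r/2,0)$ along leafwise paths of length $O(r)$ (Proposition \ref{Prop real center}). The non-integrability of $\mathcal{D}$ ($Q_x(0)-P_y(0)=1$ in adapted coordinates) is then used quantitatively: via Stokes' theorem, the lift of the center loop through $(r,0,w)$ returns with a displacement comparable to $r^2$ (Lemma \ref{cuadratico}), which forces any $\mathcal{F}^{\mathcal{D}}$-invariant analytic surface through $(r,0,w)$ to contain the whole fiber line $\{x=r,y=0\}$, and then, by iterating the contraction $r\mapsto r/2$, the whole hyperplane $\{y=0\}$ --- contradicting the fact that $\{y=0\}$ is not $\mathcal{F}$-invariant. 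If you want to salvage a Jouanolou-style genericity argument, you would at minimum have to let the pencil parameter be a non-constant rational function on $M$ and confront the same quantitative interaction between the holonomy of the chosen foliation and $d\omega$ that the paper handles explicitly; as written, your proposal assumes the dichotomy it needs to prove.
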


It is worth mentioning  that in \cite{Faro} it is proven a similar result in the case of  distributions with separated variables. 

In view of   Theorem \ref{Teorema 1}, the proof of  Theorem \ref{Teorema 2} will be  restricted to the case where  $\mathcal D$ is
not integrable. This happens, for example, when $\mathcal D$ is a contact structure. In this case, the leaves of $X$ are holomorphic Legendrian curves of $\mathcal D$. For this reason, given a distribution $\mathcal D$ in dimension three, the vector fields that are tangent to $\mathcal D$ will be called Legendrian vector fields. Analogously, the one-dimensional  singular holomorphic foliations that are tangent to $\mathcal D$ will be called Legendrian foliations.

The paper is organized as follows: 
 Theorem \ref{Teorema 1} is proved  in Section \ref{Section 2}. The remaining sections are devoted to the proof of
 Theorem \ref{Teorema 2}, for which    we assume that $\mathcal{D}$ is not integrable. 

\section{Proof of Theorem \ref{Teorema 1}}\label{Section 2}
Let  $p=\dim \mathcal D\ge 2$ and let $\mathcal G$ be the singular holomorphic foliation defined by $\mathcal D$. 
Take a generic point $x \in M$ and let $L$ be the leaf of $\mathcal{G}$ passing through $x$. Denote by $\mathfrak{M}_x$ the sheaf of sections of $\mathcal{O}_M$ vanishing at $x$, thus $T_{\mathcal{G}}\otimes \mathfrak{M}_x$ is the sheaf of vector fields tangent to $\mathcal{G}$ and vanishing at $x$. Taking local coordinates $(z_1, \ldots, z_n)$ around $x=0$ such that $\mathcal{G}$ is defined by $dz_{p+1}= \ldots = dz_n=0$ we see that $\operatorname{Hom}(T_xL, T_xL) \simeq M_p(\mathbb{C})$. Identifying this space with the skyscraper sheaf which has this vector space as its stalk at $x$, we have an exact sequence 
$$
0\rightarrow T_\mathcal{G}\otimes \mathfrak{M}_x^2 \rightarrow T_\mathcal{G}\otimes \mathfrak{M}_x \rightarrow M_p(\mathbb{C}) \rightarrow 0,
$$
where the image of the last map is obtained by taking the linear part of the local vector fields at the singular point $x$. Take now $\mathcal{L}$ an ample line bundle and $k>0$ so big that $H^1(M, T_\mathcal{G}\otimes \mathfrak{M}_x^2 \otimes \mathcal{L}^k)=0$. Therefore the associated map
$$
H^0(M, T_\mathcal{G}\otimes \mathfrak{M}_x \otimes \mathcal{L}^k) \rightarrow M_p(\mathbb{C})
$$
is surjective. Then we can choose a vector field $X$  tangent to $\mathcal{G}$ such that $X|_{L}$ has an isolated singularity at $x$ and a very generic linear part $$DX|_L (p)=(\mu_1 z_1,\ldots, \mu_d z_d, 0, \ldots, 0 ).$$ In particular $X|_L$ is locally linearizable  (see \cite[p. 183]{Ar}) and has no  non-constant meromorphic first integral. By taking $L_t$ a leaf of $\mathcal{G}$ close to $L$ we have the same for $X|_{L_t}$. So if $h \in \mathbb{C}(X)$ then $h|_{L_t}$ must be constant and so  $h\in \mathbb{C}(\mathcal{G})$.

\section{Lifting one-dimensional foliations}\label{sec1}
The Legendrian foliation announced in Theorem \ref{Teorema 2} will be constructed as a lifting of a suitable foliation $\mathcal{F}$ on $\mathbb{P}^2$, in the sense of the following proposition.

\begin{prop}\label{lifting}
Let $M$ and $N$ be projective manifolds of dimension $m$ and $n$ respectively and let $\Pi: M \dashrightarrow N$ be  a dominant rational map. Let $\mathcal{D}$ be a singular holomorphic distribution of dimension $n$ on $M$,  generically transverse to the fibers of $\Pi$. Then, given a one-dimensional singular holomorphic foliation $\mathcal{F}$ on $N$, there is a unique  Legendrian holomorphic foliation $\mathcal{F}^{\mathcal{D}}$ on $M$ such that $d\Pi(T_{\mathcal{F}^{\mathcal{D}}})=T_{\mathcal{F}}$.
\end{prop}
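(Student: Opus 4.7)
The plan is to construct $\mathcal{F}^{\mathcal{D}}$ pointwise on a dense Zariski open subset of $M$ where everything is well-defined, and then to extend it to a singular foliation on all of $M$ by saturation. Let $U \subset M$ be the open set where $\Pi$ is a holomorphic submersion, $\mathcal{D}$ has constant rank $n$, and $\mathcal{D}$ is transverse to the fibers of $\Pi$. At each point $p \in U$, the subspace $\ker d\Pi_p \subset T_pM$ has dimension $m-n$ and meets $\mathcal{D}_p$ trivially by transversality; since $\dim \mathcal{D}_p = n = \dim T_{\Pi(p)} N$, the restriction $d\Pi_p|_{\mathcal{D}_p} \colon \mathcal{D}_p \to T_{\Pi(p)} N$ is a linear isomorphism.

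Using this, I would define, for $p\in U \setminus \Pi^{-1}(\sing \mathcal{F})$, the line
$$T_{\mathcal{F}^{\mathcal{D}}, p} := \bigl(d\Pi_p|_{\mathcal{D}_p}\bigr)^{-1}\bigl(T_{\mathcal{F}, \Pi(p)}\bigr) \subset \mathcal{D}_p.$$
Concretely, given a local holomorphic vector field $v$ tangent to $\mathcal{F}$ near $\Pi(p)$ and a local frame $X_1,\ldots,X_n$ of $\mathcal{D}$ near $p$, one can solve $v\circ\Pi = \sum a_i\,d\Pi(X_i)$ uniquely for holomorphic functions $a_i$ (thanks to the fiberwise isomorphism above), and $\tilde v := \sum a_i X_i$ is a local Legendrian vector field lifting $v$. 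Changing $v$ by a non-vanishing multiple or changing the frame $X_i$ only rescales $\tilde v$, so this produces a well-defined one-dimensional holomorphic distribution on $U\setminus \Pi^{-1}(\sing\mathcal{F})$ which is contained in $\mathcal{D}$.

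To extend to all of $M$, I would invoke the standard fact that a rank-one holomorphic subsheaf of $TM$ defined on a dense Zariski open subset of a smooth projective manifold extends uniquely to a (saturated) rank-one subsheaf on the whole manifold, i.e.\ to a singular holomorphic foliation $\mathcal{F}^{\mathcal{D}}$ on $M$. By construction this foliation is Legendrian and satisfies $d\Pi(T_{\mathcal{F}^{\mathcal{D}}})=T_{\mathcal{F}}$ wherever everything is regular, hence in the rational sense on all of $M$.

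For uniqueness, if $\mathcal{F}_1$ and $\mathcal{F}_2$ are two Legendrian foliations with $d\Pi(T_{\mathcal{F}_i}) = T_{\mathcal{F}}$, then at any point of $U$ outside the relevant singular loci their tangent lines both lie in $\mathcal{D}_p$ and map onto the same line in $T_{\Pi(p)}N$; the injectivity of $d\Pi_p|_{\mathcal{D}_p}$ forces them to coincide on a dense open set, and hence globally as foliations. The only non-automatic point is the extension across the singular loci of $\mathcal{D}$, $\mathcal{F}$ and the indeterminacy locus of $\Pi$, but this is a routine saturation argument once the construction on $U$ is in hand; I do not expect any genuine obstacle.
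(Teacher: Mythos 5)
Your construction is essentially the paper's: at a generic point you take the preimage of $T_{\mathcal F,\Pi(p)}$ inside $\mathcal D_p$, which is exactly the intersection $\mathcal D(p)\cap d\Pi(p)^{-1}(T_{\mathcal F}(\Pi(p)))$ used in the paper, and the transversality/dimension count is the same. You supply more detail on holomorphy, the saturation to all of $M$, and uniqueness, all of which the paper leaves implicit, but the argument is the same.
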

\begin{proof}
Let $p \in M$ be a generic point and let $q=\pi(p) \in N$. Then $$\dim d\Pi(p)^{-1}(T_{\mathcal{F}}(q)) = m-n+1$$  and so,  by transversality,  $\mathcal{D}(p)\cap d\Pi(p)^{-1}(T_{\mathcal{F}}(q))$ will be a one dimensional subspace of $T_pM$, defining our desired foliation.  
\end{proof}

From now on, we  focus in the case $m=3$, $N=\mathbb P ^2$. At a  generic point $\mathfrak m\in M$ we can find local holomorphic coordinates $(x,y,z)$,    such that $\Pi(x,y,z)=(x,y)$, where $(x,y)$ are affine coordinates on $\mathbb P^2$. Since $\mathfrak m$ is generic, we can assume that $\mathcal D$ is transverse to $\Pi^{-1}(0)$ at the origin, whence   $\mathcal{D}$ is defined near the origin by a holomorphic 1-form $$\Omega= dz - P(x,y,z)dx - Q(x,y,z)dy.$$  Up to a change of coordinates of the form 
$z\mapsto z+ax+by$, we have that $P(0)=Q(0)$. Moreover, if $\mathcal D$ is not integrable, we can suppose that $\Omega\wedge d\Omega\neq 0$ at $\mathfrak m$, which means that $Q_x(0)-P_y(0)\neq 0$. In this case, a further change of coordinates of the form $(x,y)\mapsto (\nu x,\nu y)$ allows us to assume that $Q_x(0)-P_y(0)=1$. 

A singular holomorphic foliation $\mathcal{F}$ on $\mathbb P^2$ is defined by a polinomial vector field  $$X=A(x,y)\frac{\partial}{\partial x}+B(x,y) \frac{\partial}{\partial y}.$$  Thus,   the lifting $\mathcal{F}^{\mathcal{D}}$  is  defined in the local coordinates $(x,y,z)$ by the holomorphic vector field $$X^{\mathcal{D}}=A\frac{\partial}{\partial x}+B\frac{\partial}{\partial y} +(AP+BQ)\frac{\partial}{\partial z}.$$

In the next section we will choose the foliation $\mathcal{F}$ on $\mathbb{P}^2$ that will be lifted in order to prove  our main result.

\section{Selection of the foliation $\mathcal F$}\label{Real center}

\begin{prop}\label{Prop real center}
Let $\mathcal F$ be the holomorphic foliation on $\mathbb P^2$ defined in affine coordinates by the logarithmic 1-form
 $$\omega=-\lambda i  \frac{d(y-ix)}{y-ix}+\lambda i \frac{d(y+ix)}{y+ix}+(1+\lambda i )\frac{d(y-3ix)}{y-3ix}
 +(1-\lambda i )\frac{d(y+3ix)}{y+3ix},$$ 
 where $\lambda=\frac{\log 2}{\pi}$. Then $\omega$ defines a global center in $\mathbb R^2$ and there exists a constant $\nu>1$ such that, given $r>0$, the points $(r,0)$ and $(\frac r2,0)$ are contained in the same leaf $L$ of $\mathcal F$ and they are connected by a smooth curve in $L$ of euclidean length smaller than $\nu r$. 
\end{prop}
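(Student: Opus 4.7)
The strategy has two halves. First, exhibit a single-valued real first integral on $\mathbb R^2\setminus\{0\}$ that witnesses the global center. Then, using the scaling symmetry $(x,y)\mapsto(sx,sy)$, reduce the connection claim to one fixed monodromy computation at $r=1$. For the real first integral, I write $y\pm ix=r_1 e^{\pm i\theta_1}$ and $y\pm 3ix=r_2 e^{\pm i\theta_2}$ with continuous real branches on $\mathbb R^2\setminus\{0\}$. A direct substitution of $d\log(y\pm ix)=d\log r_1\pm i\,d\theta_1$, and analogously for $r_2,\theta_2$, collapses the four terms of $\omega$:
\[
\omega|_{\mathbb R^2}\;=\;2\,d\!\bigl(\log r_2+\lambda(\theta_2-\theta_1)\bigr)\;=:\;2\,dH.
\]
Since $\theta_1$ and $\theta_2$ have the same winding number around $0$, the difference $\theta_2-\theta_1$ is globally single-valued, and so is $H$. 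In polar coordinates $H=\log\rho+h(\phi)$ for a smooth $2\pi$-periodic $h$; every level set of $H$ is therefore a smooth Jordan curve around $0$, giving the global center.

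For the scaling reduction, each summand $d\log(y-cix)$ is invariant under $(x,y)\mapsto(sx,sy)$ with $s\in\mathbb R^+$, so $\omega$ and $\mathcal F$ are invariant and euclidean lengths scale linearly with $s$. It therefore suffices to build, in some complex leaf of $\mathcal F$, a smooth curve of finite length $\ell$ from $(1,0)$ to $(1/2,0)$; then $\nu:=\max(2,\ell)$ works for all $r>0$.

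To construct this curve I would pass to $u:=y/x$, which separates variables in $\omega$:
\[
\omega\;=\;2\,\frac{dx}{x}+g(u)\,du,\qquad g(u)\;=\;-\frac{i\lambda}{u-i}+\frac{i\lambda}{u+i}+\frac{1+i\lambda}{u-3i}+\frac{1-i\lambda}{u+3i},
\]
so a leaf is $x=C\,\Phi(u)^{-1/2}$ with $\Phi(u):=(u-i)^{-i\lambda}(u+i)^{i\lambda}(u-3i)^{1+i\lambda}(u+3i)^{1-i\lambda}$; principal branches give $\Phi(0)=9$, so $C=3$ for the leaf through $(1,0)$. Now take the smooth analytic loop $u(t):=i(1-e^{2\pi it})$, $t\in[0,1]$, a counterclockwise circle of radius $1$ around $u=i$ based at $u=0$ and enclosing only $u=i$ among $\{\pm i,\pm3i\}$. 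Under this loop, analytic continuation adds $(-i\lambda)(2\pi i)=2\pi\lambda=2\log 2$ to $\log\Phi$, so $\Phi\mapsto 4\Phi$ and $\Phi^{-1/2}\mapsto\tfrac{1}{2}\Phi^{-1/2}$. The lift
\[
\gamma(t)\;:=\;\bigl(\,3\,\Phi(u(t))^{-1/2},\ 3\,u(t)\,\Phi(u(t))^{-1/2}\,\bigr)
\]
is thus a smooth curve in a single complex leaf of $\mathcal F$ from $(1,0)$ to $(1/2,0)$. On the loop, $|u\pm i|$ and $|u\pm 3i|$ are bounded above and away from $0$, so $|\Phi|$, $|\Phi'|$ and $|u'|$ are uniformly controlled; hence $|\gamma'|$ is bounded and $\ell$ is finite.

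The main obstacle is the monodromy bookkeeping: I must check that one counterclockwise turn around $u=i$, and no other puncture, multiplies $\Phi^{-1/2}$ by \emph{exactly} the real number $1/2$, so that $\gamma(1)$ is the real point $(1/2,0)$ rather than a point with extra phase. This is where the fine-tuned value $\lambda=(\log 2)/\pi$ is essential: it makes the monodromy factor on $\Phi$ equal to the real positive number $4$, hence its principal square root is unambiguously $2$. Everything else reduces to elementary residue and length estimates.
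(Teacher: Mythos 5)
Your proposal is correct. The second half (connecting $(r,0)$ to $(r/2,0)$) is essentially the paper's own argument: you pass to $u=y/x$, separate variables, and run the same loop $u(t)=i(1-e^{2\pi i t})$ around the single puncture $u=i$, so that the residue $-i\lambda$ with $\lambda=\log 2/\pi$ multiplies $x$ by exactly $1/2$ with no residual phase; the reduction to $r=1$ by the homothety invariance of $\omega$ is also the paper's. Where you genuinely diverge is the global-center claim. The paper restricts $\omega$ to $\mathbb R^2$, uses homothety invariance to reduce to the germ at the origin, blows up, and computes the holonomy of $\mathbb{RP}^1$ in the exceptional divisor, finding $h(z)=-z$ and hence a center. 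You instead exhibit the explicit single-valued first integral $H=\log|y+3ix|+\lambda\bigl(\arg(y+3ix)-\arg(y+ix)\bigl.\bigr)$ on $\mathbb R^2\setminus\{0\}$ (your identity $\omega|_{\mathbb R^2}=2\,dH$ checks out against the paper's real normal form, and the two arguments of the linear forms do have equal winding number, so $H$ is indeed single-valued), whence every leaf is the Jordan curve $\rho=e^{c-h(\phi)}$. This is more elementary and self-contained than the blow-up/holonomy computation, and it hands you for free the polar parametrization $\alpha_1(\theta)=\rho(\theta)(\cos\theta,\sin\theta)$ with $\rho,\rho'$ bounded that the paper needs later (Remark \ref{roro}); what it does not give you is the complex holonomy $h(z)=-z$ itself, though the paper never uses that beyond concluding the center. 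One cosmetic slip: with $\nu:=\max(2,\ell)$ you only get $\ell(\gamma_r)\le\nu r$, so take $\nu$ strictly larger than $\ell$ to match the stated strict inequality.
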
 
\begin{proof}
It can be see that
 $$\omega= 2\lambda\frac{xdy-ydx}{x^2+y^2} +\frac{d(9x^2+y^2)}{9x^2+y^2}
  -6\lambda\frac{xdy-ydx}{9x^2+y^2},
 $$ so $\omega$ defines a real foliation $\mathcal F_{\mathbb R}$ on $\mathbb R^2$ with a unique singularity at the origin. It turns out that the foliation $\mathcal F_{\mathbb R}$ is a global center, that is, its leaves are closed simple curves around the origin. In order to verify this assertion, notice that $\omega$ is invariant by the real homotheties  
 $(x,y)\mapsto (k x, k y)$, $k\in\mathbb R$, so we only need to show that the singularity at the origin is of center type.
  Let $\pi\colon \widetilde{\mathbb C^2}\to\mathbb C^2$ be the blow up at the origin. A complex  line $L$ through $0\in\mathbb C^2$ is strictly transformed by $\pi$ in a line intersecting the exceptional divisor  at a point
 $p_L$. If $L=\mathbb C(a,b)$, $(a,b)\in\mathbb C^2$, we write $p_L=[a:b]$; so we can identify the exceptional divisor with the complex projective line $\mathbb {CP}^1$.  The  strict transform 
 $\widetilde{\mathcal F}$ of $\mathcal F$ by $\pi$ leaves the exceptional divisor invariant and has 4 singularities, all contained in the exceptional divisor,
 at the points $[1:\pm i]$, $[1:\pm 3i]$. Then the closed curve ${\mathbb {RP}^1}=\{[x:y]\in \mathbb {CP}^1\colon x,y\in\mathbb R\}$ avoids the singularities and defines a holonomy map $h\in \diff (\mathbb C, 0)$. We will show that
 $h$ is periodic of period 2, so 
  $\mathcal F_{\mathbb R}$ will be a center.  In order to compute $h$ we can deform ${\mathbb {RP}^1}$ into the 
   circle $C=\{[1:t]\in \mathbb {CP}^1\colon |t-3i|=3\}$. In the $t$-plane $\{[1:t]\in \mathbb {CP}^1\colon t\in\mathbb C\}$, the singularities $[1: i]$ and $[1: 3i]$ are in the interior of $C$, while $[1:- i]$ and $[1:- 3i]$ are in the exterior.  In the coordinates $(t=\frac y x, x)$ of $ \widetilde{\mathbb C^2}$, 
   the circle  $C$ can be positively parametrized in the form $(t(s),0)$, $s\in[0,1]$, $t(0)=t(1)=(0,0)$, and  
   $\widetilde{\mathcal F}$ is defined by 
   \begin{align} \label{holonomy} 2\frac{dx}x=\lambda i\frac{d(t-i)}{t-i}-  \lambda i\frac{d(t+i)}{t+i} 
   -(1+\lambda i)\frac{d(t-3i)}{t-3i} -(1-\lambda i)\frac{d(t+3i)}{t+3i}.
   \end{align}
    Given 
   a point $(0,x_0)$ in $\widetilde{\mathbb C^2}$ with $x_0\in\mathbb C$ small,  the curve $C$ can be lifted to a leaf of $\widetilde{\mathcal F}$, with starting point $(0,x_0)$: this produces a curve of the form $(t(s),x(s))$, $s\in[0,1]$, contained in a leaf of 
   $\widetilde{\mathcal F}$ and such that $x(0)=x_0$. Then $(t(s),x(s))$ satisfies the equation \eqref{holonomy},
  whence
    \begin{align} \int_0^1 2\frac{x'(s)}{x(s)}ds&=\lambda i\int_C\frac{d(t-i)}{t-i}
   -(1+\lambda i)\int_C\frac{d(t-3i)}{t-3i} =-2\pi i.
   \end{align}Therefore $x(1)=-x_0$, which means that $h(z)=-z$ and so $h$ is periodic of order two.
   
   Given $r>0$, let $\alpha_r$ be the orbit of $\mathcal F_{\mathbb R}$ through $(r,0)\in\mathbb R^2$. The orbit
   $\alpha_1$ can be parametrized in the form $\alpha_1(\theta)=\rho(\theta)(\cos\theta,\sin\theta)$, $\theta\in[0,2\pi]$,  for some $ \rho\colon [0,2\pi]\to(0,+\infty)$. 
   Therefore, since $\mathcal F_{\mathbb R}$ is invariant by the homotheties $(x,y)\mapsto (kx,ky)$, we
   can parametrize $\alpha_r(\theta)=r\rho(\theta)(\cos\theta,\sin\theta)$, $\theta\in[0,2\pi]$. In particular
    $\ell (\alpha_r)=r\ell(\alpha_1) $. 
   
 Now, given $r>0$, we will show that the points $(r,0)$ and $(\frac r2,0)$ belong to the same leaf of $\mathcal F$. 
Consider the circle $\tau(s)=i-ie^{is}$, $s\in[0,2\pi]$. Denote by $\eta$ the right hand of equation \eqref{holonomy}.
Then, if we set $f(s)=\frac 12\int_{\tau|_{[0,s]}}{\eta}$, we see that $(t,x)=(\tau (s), re^{f(s)})$ satisfies 
equation \eqref{holonomy}. This means that the curve $(t,x)=(\tau (s), re^{f(s)})$ is contained in a 
leaf of $\widetilde{\mathcal F}$, which in turn means  that the curve $\gamma_r (s)=(re^{f(s)}, \tau (s)re^{f(s)})$, $s\in[0,2\pi]$, is contained in a leaf of
$\mathcal F$. It is straightforward to check that $f(2\pi)=-\pi\lambda=-\log 2$, and so $\gamma_r(2\pi)=(\frac r2,0)$. Thus
$\gamma_r$ connects $f(0)=(r,0)$ with $f(2\pi)=(\frac r2,0)$ in a leaf of $\mathcal F$. Moreover,
if we set $\gamma_1(s)=(e^{f(s)}, \tau (s)e^{f(s)})$ 
it is clear that  $\ell(\gamma_r)=r\ell (\gamma_1)  $. Finally, we fix $\nu>1$ such that $\nu\ge\max\{\ell(\gamma_1),\ell (\alpha_1)\}$, so we have the inequalities  $\ell(\alpha_r),\ell (\gamma_r)<\nu r$ for all $r>0$. \end{proof} 
\begin{remark}\label{roro}
Furthermore,
we can assume $\nu$ so big that $|\rho(\theta)|,|\rho'(\theta)|< \nu$ for all $\theta\in[0,2\pi]$.
\end{remark}

\section{Proof of Theorem \ref{Teorema 2}}
Let $M\subset \mathbb{P}^N$ be a projective 3-manifold and let $\mathcal{D}$ be a singular holomorphic distribution of dimension two. By Theorem   \ref{Teorema 1}, we can assume that $\mathcal D$ is not integrable.  Consider $P\colon  \mathbb{P}^N \dashrightarrow \mathbb{P}^2$ a generic linear projection and $\Pi =P|_M \colon M \dashrightarrow \mathbb{P}^2$ its restriction to $M$, which is dominant and has  generic fibers transverse to $\mathcal{D}$. As explained in Section \ref{sec1},  we can find local holomorphic coordinates $(x,y,z)$   with  $\Pi(x,y,z)=(x,y)$, where $(x,y)$ are affine coordinates on $\mathbb P^2$, such that  $\mathcal{D}$ is defined by a holomorphic 1-form $\Omega= dz - Pdx - Qdy,$
where $P(0)=Q(0)=0$ and $Q_x(0) - P_y(0)=1$.

Now, take the foliation $\mathcal{F}$ of Section \ref{Real center} given  by the 1-form $\omega$ of Proposition \ref{Prop real center} and
let $ \mathcal{F}^{\mathcal{D}}$ its lifting to $M$. It is known that this foliation is generated by a rational  vector field on $M$.  Thus, we must prove that  $ \mathcal{F}^{\mathcal{D}}$  has no 
rational first integral. Furthermore, we will prove that   $ \mathcal{F}^{\mathcal{D}}$   has no meromorphic first integral near the origin. To do so, take  $\delta_0>0$ such that the polydisc
$$\Delta_0=\{|x|<\delta_0, |y|<\delta_0, |z|<\delta_0\}$$
is contained in the domain of our local coordinates an suppose by contradiction that there is a meromorphic first integral  $H$ of 
$ \mathcal{F}^{\mathcal{D}}$   on $\Delta_0$.

The proof is based on the following proposition. If $U$ is an open set in $\mathbb C^3$, we say that $S$ is an analytic surface in $U$ to mean
that $S$ is the zero set of a holomorphic function on $U$. 
\begin{prop}\label{mainprop}
There exists $\delta\in(0,\delta_0)$  such that, if $S$ is an analytic surface in $ \Delta=\{|x|<\delta, |y|<\delta,|z|<\delta\}$ that is 
$\mathcal F ^{\mathcal D}$-invariant and pass through a point of the form
$(r,0,w)$ with $0<r<\delta$ and $|w|<\delta$, then $S$ contains  $\{x=r,y=0\}\cap \Delta$.
\end{prop}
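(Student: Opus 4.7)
The plan is to use the non-integrability hypothesis $Q_x(0)-P_y(0)=1$ together with the loops $\alpha_{r'}$ and curves $\gamma_{r'}$ of Proposition~\ref{Prop real center} to produce, from the single point $(r,0,w)\in S$, an infinite sequence of distinct points $(r,0,w_k)\in S$ with $w_k\to w$. Since $S\cap\{x=r,y=0\}\cap\Delta$ is an analytic subset of a one-dimensional disk, an interior accumulation point will force it to be the whole disk.

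The first ingredient is a quantitative $\mathcal{D}$-holonomy estimate for $\alpha_{r'}$. Lifting $\alpha_{r'}$ in $\mathcal{F}^{\mathcal{D}}$ from $(r',0,z')$ solves $\dot z=P\dot x+Q\dot y$ along $\alpha_{r'}$, so the endpoint is $(r',0,\phi_{r'}(z'))$ with $\phi_{r'}(z')-z'=\int_{\alpha_{r'}}P\,dx+Q\,dy$. A Grönwall estimate gives $|z(\theta)-z'|=O(r'^2)$ uniformly along the lift; replacing $z$ by $z'$ in the integrand, applying Stokes' theorem on the real disk $D_{r'}=r'\cdot D_1$ bounded by $\alpha_{r'}$, and expanding $Q_x-P_y$ to leading order yields
\[
\phi_{r'}(z')-z' \;=\; A_1\,r'^2\bigl(1+O(r'+z')\bigr),\qquad A_1:=\operatorname{area}(D_1)>0,
\]
where non-vanishing of the leading coefficient is due precisely to $Q_x(0)-P_y(0)=1$.

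Next, for each $k\ge 0$, concatenate $\Gamma_k:=\gamma_r*\gamma_{r/2}*\cdots*\gamma_{r/2^{k-1}}$ (a path of total Euclidean length $\le 2\nu r$ inside a single $\mathcal{F}$-leaf joining $(r,0)$ to $(r/2^k,0)$) and form the closed loop $\beta_k:=\Gamma_k*\alpha_{r/2^k}*\Gamma_k^{-1}$ based at $(r,0)$. Its $\mathcal{F}^{\mathcal{D}}$-lift from $(r,0,w)$ stays in $S$ by $\mathcal{F}^{\mathcal{D}}$-invariance (once $\delta$ is chosen small enough that a uniform Grönwall bound keeps the lift inside $\Delta$) and ends at some $(r,0,w_k)$. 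A variational calculation — both $\Gamma_k$-traversals are solutions of the same first-order ODE $\dot z=F(s,z)$ with $F$ linear in $z$ to leading order, separated at the common endpoint by $A_1 r^2/4^k(1+O(\delta))$ (the $\alpha_{r/2^k}$-contribution), and linearized propagation along $\Gamma_k$ has log-coefficient integral of order $O(r)$ — shows that the two $\Gamma_k$-contributions cancel up to $O(r^3/4^k)$. Combined with the central $\alpha_{r/2^k}$-contribution $A_1 r^2/4^k$ this produces
\[
w_k-w \;=\; \frac{A_1\,r^2}{4^k}\bigl(1+O(r+\delta)\bigr),
\]
so the points $(r,0,w_k)\in S$ are pairwise distinct for $\delta$ small, satisfy $w_k\to w$, and provide the desired accumulation.

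The principal obstacle is the variational cancellation: one must prove that the difference between forward and reverse traversals of $\Gamma_k$ is strictly subleading, of order $O(r^3/4^k)$ rather than $O(r^2/4^k)$, so that the central loop's contribution $A_1 r^2/4^k$ survives and different $k$ yield different $w_k$. A secondary technicality is choosing $\delta$ — via uniform Grönwall bounds that use the constants of Remark~\ref{roro} and the linear vanishing of $P,Q$ at the origin — so that every lift of $\beta_k$ starting from a point $(r,0,w)$ with $0<r<\delta$ and $|w|<\delta$ remains inside $\Delta$, which is what lets the $\mathcal{F}^{\mathcal{D}}$-invariance of $S$ deliver $(r,0,w_k)\in S$.
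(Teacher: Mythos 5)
Your proposal is correct and follows essentially the same route as the paper: the same loops $\Gamma_k*\alpha_{r/2^k}*\Gamma_k^{-1}$, the same quadratic holonomy estimate for $\alpha_{r'}$ obtained from Stokes' theorem and $Q_x(0)-P_y(0)=1$ (the paper's Lemma \ref{cuadratico}, which integrates over a cone with vertex $(0,0,w)$ rather than over the planar disk, to the same effect), the same Gr\"onwall control keeping the lifts inside a fixed polydisc (Lemma \ref{levantar}), and the same accumulation-of-points argument on the analytic set $S\cap\{x=r,y=0\}$. The one genuine divergence is the step you flag as the principal obstacle. You aim for a two-sided asymptotic $w_k-w=\frac{A_1r^2}{4^k}\bigl(1+O(r+\delta)\bigr)$, which forces you to show that the forward and reverse traversals of $\Gamma_k$ cancel to order $O(r^3/4^k)$; this is more than is needed. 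The paper splits the two requirements: (i) $w_k\neq w$ comes for free from uniqueness of liftings --- if $w_k=w$, then the lifted endpoints $u$ and $v$ before and after traversing $\alpha_{r/2^k}$ would have to coincide, contradicting the lower bound $|v-u|\ge \frac1c\left(\frac{r}{2^k}\right)^2$ of Lemma \ref{cuadratico}; and (ii) $w_k\to w$ needs only the one-sided Gr\"onwall bound $|w_k-w|\le e^{4M\nu r}|v-u|$. If you prefer to keep your asymptotic formulation, observe that the cancellation you want is exactly the statement that the holonomy of $\mathcal F^{\mathcal D}$ along $\Gamma_k^{-1}$ is bi-Lipschitz with constants $e^{\pm 2M\ell(\Gamma_k)}=e^{\pm O(r)}$, which follows from Gr\"onwall applied in both directions; no third-order cancellation between the two traversals is actually required, so your obstacle dissolves.
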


 Let us use this proposition to finish the proof of Theorem \ref{Teorema 2}. Let $\delta>0$  and $\Delta$ be as given by Proposition \ref{mainprop}.    By reducing $\delta>0$ if necessary, we can assume that 
 $|P|,|Q|<\frac{1}{4\nu}$   on  $\Delta$.
  Take a point $\zeta_0=(r,0,z_0)$
 with  $0<r<\frac{\delta}{2\nu}$ and $r+|z_0|<\delta$. The existence of the meromorphic  first integral $H$ allows us to find a  
 $\mathcal F ^{\mathcal D}$-invariant analytic surface $S$ in $\Delta$  passing through $\zeta_0$ and so,
 by Proposition \ref{mainprop}, $S$ contains  $\{x=r,y=0\}\cap \Delta$. Recall that the points
 $(r,0)$ and $(r/2,0)$ are contained in the same leaf $L$ of $\mathcal F$ and there exists
 a curve $\gamma\colon [0,1]\to L$ such that $\gamma(0)=(r,0)$, $\gamma(1)=(r/2,0)$ and
 $\ell (\gamma)< \nu r$. Notice that, since $r<\frac{\delta}{2\nu}$ and $\nu>1$, the curve $\gamma$ is contained
 in the bi-disc $\{|x|<\delta,|y|<\delta\}$. We can lift $\gamma$ to a curve $\tilde\gamma$ tangent to $\mathcal D$
 and starting at the point
 $\zeta_0$; precisely,   if $\gamma(t)=(x(t),y(t))$, the curve $\tilde\gamma$ has the form  $\tilde \gamma(t)=(x(t),y(t),z(t))$, $t\ge 0$ small enough,  where $z(t)$ satisfies
 \begin{align}\nonumber z'=P(x,y,z)x'+Q(x,y,z)y',\quad z(0)=z_0.
 \end{align}
 It is clear that $\tilde\gamma$ is contained in the leaf of  $\mathcal F ^{\mathcal D}$
 through $\zeta_0$. 
 Let us prove that $\tilde\gamma(t)$ is defined and belongs to $\Delta$ for all $t\in[0,1]$. 
  Otherwise
  we can find $\sigma\in(0,1)$ such that $z(t)$ is defined and is contained in $\Delta$  for all $t\in [0,\sigma]$,
  and $\tilde\gamma(\sigma)$ is as close to $\partial \Delta$ as desired. Thus,
  since $\gamma([0,1])$ is contained in  $\{|x|<\delta,|y|<\delta\}$, we can assume that
 $\delta-r/2<|z(\sigma)|<\delta$. Then $|z'(t)|= |Px'+Qy'|\le \frac{1}{2\nu}|\gamma'|$   for all $t\in [0,\sigma]$, so  $|z(\sigma)-z(0)|\le \frac{1}{2\nu}\ell (\gamma)<r/2$ and therefore 
 $|z(\sigma)|<r/2+|z_0|<\delta-r/2$, which is a contradiction.  Then 
 $\tilde\gamma(t)$ is defined and belongs to $\Delta$ for all $t\in[0,1]$ and we conclude that the point
 $(r/2,0,z_1)\colon=\tilde\gamma(1) \in \Delta$  is contained in the leaf of  $\mathcal F ^{\mathcal D}$
 through $\zeta_0$. In particular, $(r/2,0,z_1) \in S$ and, by Proposition \ref{mainprop}, $S$ contains  $\{x=r/2,y=0\}\cap\Delta$.  Proceeding as above, with 1 instead of $\sigma$, we obtain the inequality
 $|z_1|<r/2+|z_0|$, whence $r/2+|z_1|<\delta$. Therefore we can repeat the argument for
 $(r/2,0,z_1)$ instead of $(r,0,z_0)$ to prove that $S$ contains  $\{x=r/4,y=0\}\cap \Delta$. Iterating this process we conclude that $S$ contains  $\{x=r/2^j,y=0\}\cap\Delta$ for all $j\in\mathbb N$
 and therefore $S$ contains the hyperplane $\{y=0\}\cap\Delta$. Then  $\{y=0\}\cap\Delta$
 is $\mathcal F ^{\mathcal D}$-invariant. Therefore, the line  $\{y=0\}$ in the plane $(x,y)$
 is invariant by the foliation $\mathcal F$, which is impossible. \qed

\section{Proof of Proposition \ref{mainprop}}\label{mainsection}
   
   Fix $\epsilon\in (0,1/2)$.  
  From the properties of $\omega$ at the origin we can take $\delta\in (0,\delta_0)$ such that the closure of 
  $$\Delta'\colon =\{|x|<5\nu\delta,|y|<5\nu\delta,|z|<5\nu\delta\} $$ is contained in
$\Delta_0$, and such that  $|P|<\epsilon$, $|Q|<\epsilon$ and
    $|1-\frac{\partial Q}{\partial x}+ \frac{\partial P}{\partial y}|<\epsilon$ on $\overline{\Delta'}$.
   Fix $ M>0$ such that 
   $|\frac{\partial P}{\partial z}|\le M$ and  $|\frac{\partial Q}{\partial z}|\le M$ on $\Delta'$. Notice that, by reducing $\epsilon>0$
   if necessary, we can assume that $\epsilon  M$ is as small as desired --- we use this fact later.
   
  Let  $\gamma\colon[a,b]\to \mathbb C^2$, $\gamma(t)=(x(t),y(t))$,  be a piecewise smooth  curve contained in a leaf of $\mathcal F$. We say that $\tilde\gamma \colon [a,b]\to \mathbb C^3$ is a lifting of $\gamma$ if $\tilde\gamma$  express in the form
  $\tilde\gamma(t)=(x(t),y(t),z(t))$ for some $z \colon [a,b]\to\mathbb C$ piecewise smooth,  such that
  $$z'=P(\tilde\gamma)x'+Q(\tilde\gamma)y'.$$ In this case $\tilde\gamma$ is contained
   in a leaf of $\mathcal F_{\mathcal D}$. If $\tilde\gamma$ is contained in $\Delta'$, it follows from the transversality of $\mathcal D$ to the fibers of $\Pi\colon (x,y,z)\to (x,y)$ that $\tilde\gamma$ is the unique lifting of 
   $\gamma$ with starting point $\tilde\gamma(a)$. 
  
   \begin{lemma}\label{levantar} Consider  $(r,0,w)$ with $0<r<\delta$ and $|w|<\delta$. Let $\gamma\colon[a,b]\to \mathbb C^2$  be a piecewise smooth  curve starting at $(r,0)$ and contained in a leaf of $\mathcal F$, such that $\ell (\gamma)<4\nu r$.  Then $\gamma$ can be lifted to a curve $\tilde\gamma$ starting at $(r,0,w)$ and contained in $\Delta'$.  
   \end{lemma}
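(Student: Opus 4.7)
The plan is to run a standard continuity argument for the lifting ODE $z'=P(x,y,z)x'+Q(x,y,z)y'$ with initial datum $z(a)=w$, tracking explicit bounds that force $\tilde\gamma=(x,y,z)$ to stay well inside the polydisc $\Delta'$.

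First I would define $T\in(a,b]$ to be the supremum of times $t$ for which a lift $\tilde\gamma\colon[a,t]\to\mathbb{C}^3$ with $\tilde\gamma(a)=(r,0,w)$ exists and has image in $\Delta'$. Since $P,Q$ are holomorphic (hence locally Lipschitz in $z$) on $\overline{\Delta'}\subset\Delta_0$, and since the initial point $(r,0,w)$ lies in $\Delta'$, standard ODE theory gives $T>a$; uniqueness of the lift follows from the transversality of $\mathcal D$ to the fibers of $\Pi$ on $\Delta'$, as pointed out at the start of Section \ref{mainsection}. Possible breakpoints of the piecewise smooth curve $\gamma$ cause no trouble: one solves the ODE on each smooth piece and concatenates continuously.

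Next I would obtain uniform a priori bounds on $\tilde\gamma(t)$ for $t\in[a,T)$. Since $x(a)=r$, $y(a)=0$, $r<\delta$ and $\ell(\gamma)<4\nu r<4\nu\delta$, we immediately get
\begin{align*}
|x(t)|\le r+\ell(\gamma)<(1+4\nu)\delta<5\nu\delta,\qquad |y(t)|\le\ell(\gamma)<4\nu\delta<5\nu\delta.
\end{align*}
For the vertical coordinate, the bounds $|P|,|Q|<\epsilon$ on $\overline{\Delta'}$ and $|x'|,|y'|\le|\gamma'|$ yield
\begin{align*}
|z(t)-w|\le\int_a^t\bigl(|P||x'|+|Q||y'|\bigr)\,ds\le 2\epsilon\,\ell(\gamma)<8\nu\epsilon\,r,
\end{align*}
so that $|z(t)|<|w|+8\nu\epsilon\,\delta<(1+8\nu\epsilon)\delta$. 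Since we were free to choose $\epsilon$ small (the paragraph preceding the lemma explicitly notes that $\epsilon M$, and a fortiori $\epsilon$ itself, may be taken as small as desired), we may assume $\epsilon$ small enough that $1+8\nu\epsilon<5\nu$, which holds already for $\epsilon\le 1/(2\nu)$ since $\nu>1$. Consequently $|z(t)|<5\nu\delta$ throughout $[a,T)$.

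Finally I would rule out $T<b$ by continuity. The three strict inequalities above pass to $t=T$, placing $\tilde\gamma(T)$ in the open polydisc $\Delta'$, and local ODE existence applied at $\tilde\gamma(T)$ produces an extension of the lift to $[a,T+\eta]$ whose image still lies in $\Delta'$, contradicting the maximality of $T$. Hence $T=b$ and $\tilde\gamma$ is the required lift. The only delicate point is the calibration of $\epsilon$ against $\nu$ to force $|z(t)|<5\nu\delta$; this is not a real obstacle because the freedom to shrink $\epsilon$ (and thereby $\delta$) is built into the setup at the start of Section \ref{mainsection}.
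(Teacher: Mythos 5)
Your argument is correct and is essentially the paper's own proof: both establish the a priori bound $|z(t)-w|\le 2\epsilon\,\ell(\gamma)<8\epsilon\nu\delta$ from $|P|,|Q|<\epsilon$ and conclude the lift cannot approach $\partial\Delta'$ (the paper phrases this as a contradiction at a time $\sigma$ where $(4\nu+1)\delta<|z(\sigma)|$, you as a maximal-interval argument, and the paper's standing hypothesis $\epsilon<1/2$ already gives $(1+8\nu\epsilon)\delta<(1+4\nu)\delta<5\nu\delta$ without further shrinking $\epsilon$).
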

   \proof We assume that $\gamma$ is smooth; the general case is easily obtained from this case. 
    Since $\nu>1$,  we see that $\gamma$ is contained in the bi-disc $\{|x|<5\nu\delta, |y|<5\nu\delta\}$.
 If $\gamma$ is parametrized as $\gamma(t)=(x(t),y(t))$ for $t\in [0,1]$, its lifting $\tilde\gamma$ starting at $(r,0,w)$,
 if it exists, 
  has the form  $\tilde \gamma(t)=(x(t),y(t),z(t))$, where $z(t)$ satisfies
 \begin{align} z'=P(x,y,z)x'+Q(x,y,z)y',\quad z(0)=w.
 \end{align}
 By the transversality of $\mathcal D$ to the fibers of $\Pi$, the lifting $\tilde\gamma$ do exists at least on a small interval $[0,\varepsilon]$, $\varepsilon >0$.
 Let us prove that $\tilde\gamma(t)$ is defined and belongs to $\Delta'$ for all $t\in[0,1]$. 
 Otherwise there exists $\sigma\in(0,1)$ such that $\tilde\gamma(t)$ is defined and is contained in $\Delta'$ 
  for all $t\in [0,\sigma]$ and $\tilde\gamma(\sigma)$ is as close to $\partial \Delta'$ as desired. Thus,
  since $\gamma([0,1])$ is contained in  $\{|x|<5\nu\delta,|y|<5\nu\delta\}$,  we can assume that 
 $(4\nu+1)\delta<|z(\sigma)|<5\nu\delta$. Then
  $|z'(t)|= |Px'+Qy'|\le 2\epsilon|\gamma'|$   for all $t\in [0,\sigma]$, so 
  $|z(\sigma)|\le |z(0)| + 2\epsilon\ell (\gamma)<\delta+
  8\epsilon\nu\delta<(4\nu+1)\delta$, which is a contradiction.\qed \\

   Given $r>0$, let $\alpha_r\colon[0,2\pi]\to\mathbb R^2$ be the orbit of the center $\mathcal F_{\mathbb R}$ through $(r,0)$, as defined in  Section \ref{Real center}.  
    Given $w\in\mathbb C$, let $\alpha_{r,w}$ be  --- if it is defined --- the lifting of $\alpha_r$ starting
   at $(r,0,w)$. Thus, if $\alpha_r(\theta)=(x_r(\theta), y_r(\theta))$, we have  $\alpha_{r,w}(\theta)=(x_r(\theta), y_r(\theta), \zeta_{r,w}(\theta))$, $\theta\in[0,2\pi]$, where
   $\zeta_{r,w}(0)=w$ and 
    \begin{align}\label{eqdif}\zeta_{r,w}'=
    P(\alpha_{r,w})x_r'
    +Q(\alpha_{r,w})y'_r.
    \end{align} 
   \begin{lemma}\label{cuadratico}There exists a constant $c>1$ such that, given $(r,0,w)$  with
   $0<r<\delta$ and $|w|<\delta$, then $\alpha_{r,w}$ is defined,  is contained in $\Delta'$ and $$\frac 1 c r^2\le|\zeta_{r,w}(2\pi)-\zeta_{r,w}(0)|
   \le cr^2.$$
   \end{lemma}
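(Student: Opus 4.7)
The plan is to reduce the estimate to Green's theorem in the real plane, after a small error correction that absorbs the $z$-dependence of $P,Q$. First, the existence and containment of $\alpha_{r,w}$ are immediate from Lemma~\ref{levantar} applied to $\gamma=\alpha_r$, since $\ell(\alpha_r)=r\,\ell(\alpha_1)\le \nu r<4\nu r$; this also yields $|\zeta_{r,w}(\theta)-w|\le 2\epsilon\,\ell(\alpha_r)\le 2\epsilon\nu r$ for $\theta\in[0,2\pi]$.

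Next, starting from \eqref{eqdif}, I would write
\[
\zeta_{r,w}(2\pi)-\zeta_{r,w}(0)=\int_0^{2\pi}\!\bigl[P(\alpha_{r,w})x_r'+Q(\alpha_{r,w})y_r'\bigr]d\theta
\]
and ``freeze'' the $z$-slot at $w$. Using $|P_z|,|Q_z|\le M$ on $\Delta'$ and the bound on $|\zeta_{r,w}(\theta)-w|$ above, the error incurred is at most $M\cdot 2\epsilon\nu r\cdot 2\ell(\alpha_r)\le 4M\epsilon\nu^2 r^2$.

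The leading term is the line integral $\int_{\alpha_r}P(x,y,w)\,dx+Q(x,y,w)\,dy$ of a smooth (complex-valued) 1-form along the closed real curve $\alpha_r\subset\mathbb{R}^2$. By Green's theorem applied to the Jordan domain $D_r$ it encloses, this equals
\[
\iint_{D_r}\!\bigl[Q_x(x,y,w)-P_y(x,y,w)\bigr]dx\,dy.
\]
Because $\mathcal F_{\mathbb R}$ is invariant under real homotheties (Proposition~\ref{Prop real center}), the homothety ratio $r$ gives $\operatorname{area}(D_r)=A\,r^2$ with $A:=\operatorname{area}(D_1)>0$. Since $|Q_x-P_y-1|<\epsilon$ on $\overline{\Delta'}$, the modulus of the above integral lies in the interval $[(1-\epsilon)A,(1+\epsilon)A]\,r^2$.

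Combining the two estimates by the triangle inequality,
\[
\bigl[(1-\epsilon)A-4M\epsilon\nu^2\bigr]r^2\le \bigl|\zeta_{r,w}(2\pi)-\zeta_{r,w}(0)\bigr|\le \bigl[(1+\epsilon)A+4M\epsilon\nu^2\bigr]r^2.
\]
Invoking the freedom noted at the start of Section~\ref{mainsection} to make $\epsilon M$ arbitrarily small, I would choose $\epsilon$ so that $(1-\epsilon)A-4M\epsilon\nu^2\ge A/2$ and $(1+\epsilon)A+4M\epsilon\nu^2\le 2A$, then set $c=\max(2A,2/A)$. The only delicate point—and the ``obstacle'' such as it is—is that the form $P\,dx+Q\,dy$ genuinely depends on $z$, so one cannot apply Green's theorem directly to $\alpha_{r,w}$; the freezing error must be shown to be of strictly lower order than the main term, which is precisely why the hypothesis $Q_x(0)-P_y(0)=1$ (giving a main term of exact order $r^2$) is crucial, while the $z$-deviation $|\zeta_{r,w}-w|=O(\epsilon r)$ only contributes $O(M\epsilon r^2)$.
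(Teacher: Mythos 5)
Your proof is correct, but it reaches the estimate by a different decomposition than the paper. The paper closes the lifted curve $\alpha_{r,w}$ in $\mathbb C^3$ with a vertical segment $\eta$, builds the cone $\mathcal C$ over $\alpha_{r,w}*\eta$ with vertex $(0,0,w)$, and applies Stokes' theorem there, so that the $z$-dependence of $P,Q$ shows up as the surface integrals $\int_{\mathcal C}P_z\,dzdx$ and $\int_{\mathcal C}Q_z\,dzdy$, each bounded by a constant times $\epsilon M\nu^2 r^2$ via an explicit parametrization of the cone. You instead never leave the plane: you freeze the third slot at $w$, bound the resulting error in the line integral by $4M\epsilon\nu^2 r^2$ using $|P_z|,|Q_z|\le M$ together with $|\zeta_{r,w}(\theta)-w|\le 2\epsilon\nu r$ (which indeed follows from the proof of Lemma~\ref{levantar}), and then apply planar Green's theorem to the frozen form over the Jordan domain $D_r$ bounded by $\alpha_r$, with $\area(D_r)=r^2\area(D_1)$ by homothety invariance. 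Both arguments isolate the same main term, of exact order $r^2$ because $|Q_x-P_y-1|<\epsilon$ on $\overline{\Delta'}$, and both rely on the same final observation that $\epsilon M$ can be made small; your route avoids the cone construction and the two-dimensional Stokes computation in $\mathbb C^3$, at the cost of needing the (easy but worth stating) facts that $D_r\times\{w\}\subset\Delta'$ so the mean value bound $|P(\alpha_{r,w})-P(x,y,w)|\le M|\zeta_{r,w}-w|$ is legitimate, and that $\area(D_1)>0$. The quantifier order is also handled correctly: you fix $\epsilon$ small enough at the end, exactly as the paper's remark at the start of Section~\ref{mainsection} permits.
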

   
   \proof Fix $(r,0,w)$ with
   $0<r<\delta$ and $|w|<\delta$. From Section \ref{Real center} we have that $\ell(\alpha_r)<\nu r$. Thus, by Lemma \ref{levantar} we see that $\alpha\colon =\alpha_{r,w}$ is defined and contained in $\Delta'$. By simplicity, we write $\zeta=\zeta_{r,w}$. 
   %{Then it follows from \eqref{eqdif}  that $$\zeta(\theta)-\zeta(0)=\int_{\alpha_r|_{[0,\theta]}}Pdx+Qdy; \quad \theta\in[0,2\pi],$$ which leads to \begin{align} |\zeta(\theta)-w|<2\epsilon\ell(\alpha_r)<2\epsilon\nu r,\quad \theta\in[0,2\pi].   \end{align}}
   Let $\eta$ be the euclidean  segment joining  $\alpha(2\pi)=(r,0,\zeta(2\pi))$ to $\alpha(0)=(r,0,w)$. Then
   $\alpha*\eta $ is a closed curve and we have
   $$  \zeta(2\pi)-\zeta(0)=\int_{\alpha}Pdx+Qdy= \int_{\alpha*\eta}Pdx+Qdy.$$
   Let $\mathcal C$ be the compact cone with vertex $O=(0,0,w)$ and guiding curve $\alpha*\eta $, that is, $\mathcal C$ is generated by
   the segments $[O,p]$ as $p$ runs along  $\alpha*\eta $. Then, it follows from  Stokes' Theorem that
   \begin{align}\label{ko0} 
   \zeta(2\pi)-\zeta(0)=\int_{\mathcal C}\left(Q_x-P_y\right)dxdy
   +\int_{\mathcal C}P_zdzdx +\int_{\mathcal C}
   Q_zdzdy.
   \end{align}
   We can express  
   $\mathcal C=\mathcal C_\alpha\cup \mathcal C_\eta$, where  $\mathcal C_\alpha$ and 
   $\mathcal C_\eta$ are the compact cones  with vertex $O=(0,0,w)$ and guiding curves $\zeta $ and $\eta $, respectively.  Recall --- from Section \ref{Real center} --- that 
   $\alpha_r(\theta)=(r\rho(\theta)\cos\theta,r\rho(\theta)\sin\theta)$. Then $\alpha(\theta)=
   (r\rho(\theta)\cos\theta,r\rho(\theta)\sin\theta,\zeta(\theta))$ and \eqref{eqdif} becomes 
    \begin{align}\label{eqdif1}\zeta'=
    rP(\alpha)(\rho\cos\theta)'
    +rQ(\alpha)(\rho\sin\theta)'.
    \end{align} 
   Now,
   $\mathcal C_\alpha$ can be parametrized as 
   \begin{align}\nonumber
   (x,y,z)=(tr\rho\cos\theta,tr\rho\sin\theta,(1-t)w+t\zeta(\theta));\quad (t,\theta)\in[0,1]\times
   [0,2\pi].
   \end{align}
   Thus, a straightforward computation shows that 
   \begin{align*}dxdy&=r^2\rho^2tdtd\theta;\\
   dzdx&=  [(\zeta-w)(r\rho\cos\theta)'-\zeta'r\rho\cos\theta]tdtd\theta;     \\
   dzdy&= [(\zeta-w)(r\rho\sin\theta)'-\zeta'r\rho\sin\theta]tdtd\theta     .
   \end{align*}
   Notice that $\mathcal C_\eta$ is the triangle of vertices $(0,0,w)$,  $(r,0,w)$  and  $(r,0,\zeta(2\pi))$, hence 
   $dxdy|_{\mathcal C_\eta}=dzdy|_{\mathcal C_\eta}=0$ and so
   $ \int_{\mathcal C}\left(Q_x-P_y\right)dxdy=\int_{\mathcal C_\alpha}\left(Q_x-P_y\right)dxdy$
   and $ \int_{\mathcal C}Q_zdzdy=\int_{\mathcal C_\alpha}Q_zdzdy$. Then,
   on the one hand,
    \begin{align}
    \int_{\mathcal C}\left(Q_x-P_y\right)dxdy&
    =\left(\int_{[0,1]\times[0,2\pi]}\left(Q_x-P_y\right)\rho^2 tdtd\theta\right) r^2
    \end{align} 
    and,
since  
   $|(Q_x-P_y)-1|<\epsilon<\frac 12 $ on $\Delta'$, we find positive constants  $c_0$ and $ c_1$ independent of $\epsilon$ such that 
    \begin{align}\label{ko1}
   c_0r^2\le \left|\int_{\mathcal C }\left(Q_x-P_y\right)dxdy\right|\le c_1 r^2.
   \end{align} 
  On the other hand, 
   \begin{align}\nonumber
   \int_{\mathcal C} Q_zdzdy=
   \int_{[0,1]\times[0,2\pi]} Q_z \left[(\zeta-w)(r\rho\sin\theta)'-\zeta'r\rho\sin\theta\right]tdtd\theta.
   \end{align}
   Since $|\rho|,|\rho'|<\nu$ --- see Remark \ref{roro}, we have $|(\rho\sin\theta)'|,|(\rho\cos\theta)'|\le 2\nu$ and it follows from \eqref{eqdif1} that
   $|\zeta'(\theta)|\le 4\epsilon \nu r $ and $|\zeta(\theta) - w|\le 8\pi\epsilon \nu r $ for all $\theta\in[0,2\pi]$. Therefore
     \begin{align}\begin{aligned}\label{ko2}
   \left|\int_{\mathcal C} Q_zdzdy\right|&\le
   \int_{[0,1]\times[0,2\pi]} |Q_z|(16\pi+4)\epsilon\nu^2r^2|tdtd\theta|\\
   &\le (16\pi+4)\epsilon M\nu^2\left(\int_{[0,1]\times[0,2\pi]} tdtd\theta\right) r^2.
   \end{aligned}
   \end{align}
  Proceeding as above we also obtain the bound
   \begin{align}\label{ko3}
   \left|\int_{\mathcal C_\alpha} P_zdzdx\right|
   &\le (16\pi+4)\epsilon M\nu^2\left(\int_{[0,1]\times[0,2\pi]} tdtd\theta\right) r^2.
   \end{align} Thus, in order to estimate $ \int_{\mathcal C}P_zdzdx = \int_{\mathcal C_\alpha}P_zdzdx 
   + \int_{\mathcal C_\eta}P_zdzdx $, it remains to deal with the integral
   $ \int_{\mathcal C_\eta}P_zdzdx $. Notice that $\mathcal C_\eta$ is parametrized
   as $$(x,y,z)=(s r,0, w+t [\zeta(2\pi)-w]),$$ where $(s,t)$ runs on the triangle $T=\{(s,t)\in\mathbb R^2\colon 
   0\le t\le s\le 1\}.$
   Then 
     \begin{align}\nonumber
   \left|\int_{\mathcal C_\eta} P_zdzdx\right|
   &= \left| \int_{T}  P_z [\zeta(2\pi)-w]rdtds\right|\le \int_{T}|P_z|(8\pi\epsilon \nu r)rdsdt,
   \end{align}
   so that
     \begin{align}\label{ko4}
   \left|\int_{\mathcal C_\eta} P_zdzdx\right|
   \le 4\pi\epsilon M \nu r^2.
   \end{align}
   Finally, using in \eqref{ko0} the bounds
    \eqref{ko1} --- \eqref{ko4}, if $\epsilon M$ is small enough, we find a constant $c>1$ such that
   $\frac 1 c r^2\le |\zeta(2\pi)-\zeta(0)|\le cr^2$. \qed
   
  \subsection*{Proof of Proposition \ref{mainprop}}
   Take a point $(r,0,w)$ with $0<r<\delta$ and $|w|<\delta$.
   By the properties of $\mathcal F$, the  point $(r,0)$ can be connected with $(\frac r 2, 0)$
   by a curve $\beta_1$ of length $\ell(\beta_1)<\nu r$, which is contained in the leaf $L$
   of $\mathcal F$ through $(r,0)$. Again, the point $(\frac r 2, 0)$ is connected with
   $(\frac r 4, 0)$  by a curve $\beta_2$ of length $\ell(\beta_2)<\nu (\frac r 2)$ contained in  $L$. And so on. Thus, given $n\in\mathbb N$, the curve $\beta_1 *\dots *\beta_n$  connects $(r,0)$ 
   with  $(\frac r {2^n}, 0)$ in $L$. Let $\alpha_n :=\alpha_{\frac r {2^n}}$ be the orbit of the center 
   $\mathcal F_{\mathbb R}$ passing through $(\frac r {2^n},0)$ ---
 see section \ref{Real center}.  Then the loop 
 $$\gamma_n \colon= \beta_1 *\dots *\beta_n*\alpha_n*\beta_n^{-1} *\dots *\beta_1^{-1}$$ is based at $(r,0)$ and contained in $L$. Since
   $$\ell (\gamma_n)=2\sum_{j=1}^n \ell(\beta_j) + \ell (\alpha_n)<
   2\nu\sum_{j=1}^n \frac {r}{2^{j-1}}+ \nu\frac r{2^n}<4\nu r, $$
   by Lemma \ref{levantar} we have that $\gamma_n$ can be lifted to a curve $\tilde\gamma_n$ starting at $(r,0,w)$, which is contained in the leaf $ L$ of $\mathcal F^\mathcal D|_{\Delta'}$ through $(r,0,w)$.   If $(r,0,w_n)$ is the ending point of  $\tilde\gamma_n$,
 we will prove that $w_n\neq w$ and that $\lim_{n\to\infty} w_n=w$. If we do so, Proposition
 \ref{mainprop} will be proved: If $S$ is a $\mathcal F ^{\mathcal D}$-invariant analytic surface in $\Delta$  passing through $(r,0,w)$, then $S$ contains $L$ and so $(r,0,w_n)\in S$ for all $n\in\mathbb N$,
 whence $S$ contains the line  $\{x=r,y=0\}\cap\Delta$. 
Let us prove that
  $w_n\neq w$. Let $(\frac r {2^n},0,u)$ be the ending point of the lifting of 
   $\beta_1 *\dots *\beta_n$ starting at $(r,0,w)$ and let $(\frac r {2^n},0,v)$ be the ending point of the lifting of $\alpha_n$ starting at  $(\frac r {2^n},0,u)$ . Then $(r,0,w_n)$ is the ending point of the lifting of 
   $\beta_n^{-1} *\dots *\beta_1^{-1}$ starting at  $(\frac r {2^n},0,v)$ or, equivalently,
   $(\frac r{2^n},0,v)$ is the ending point of the lifting of  $\beta_1 *\dots *\beta_n$ starting at $(r,0,w_n)$. Thus,   
   if $w_n$ were equal to $w$, by the unicity of the lifting, $v$ would be equal to $u$, which is impossible because, by Lemma \ref{cuadratico}, we have $|v-u|>\frac1c \left(\frac r{2^n}\right)^2$.   Let $\beta(t)=(x(t),y(t))$, $t\in[0,1]$, be a parametrizacion of  $\beta_n^{-1} *\dots *\beta_1^{-1}$. Let $\beta_u(t)=(x(t), y(t),z_u(t))$ be the lifting of $\beta$ starting at 
    $(\frac r {2^n},0,u)$  and let $\beta_v(t)=(x(t), y(t),z_v(t))$ be the lifting of $\beta$ starting at  
    $(\frac r {2^n},0,v)$. Thus  $z_u'=P(\beta_u)x'+Q(\beta_u)y'$ and  $z_v'=P(\beta_v)x'+Q(\beta_v)y'$
    and,  if we set $f(t)=z_v(t)-z_u(t)$,  $$f'=[P(\beta_v)-P(\beta_u)]x'+[Q(\beta_v)-Q(\beta_u)]y'.$$
    Then, since $|P(\beta_v)-P(\beta_u)|$ and $|Q(\beta_v)-Q(\beta_u)|$ are bounded by $M|z_v-z_u|$,
    we have that 
    $|f'|\le2M |\beta'||f|$. Therefore 
    $|f(t)|\le |f(0)|e^{\int_0^t2M|\beta'(s)|ds}$ and, since $\ell (\beta)<2\nu r$, we see that
       $|f(t)|\le e^{4M \nu r}|f(0)|.$ In particular, for $t=1$, we obtain that
       $|w_n-w|\le e^{4M\nu  r}|v-u|$ and therefore, by Lemma \ref{cuadratico}, we conclude that
        $|w_n-w|\le ce^{4M\nu  r}\left(\frac r{2^n}\right)^2$, which shows that $\lim w_n= w$.\qed

\end{document}